\documentclass[12pt]{amsart}
\usepackage[T2A]{fontenc}
\usepackage[cp1251]{inputenc}
\usepackage[english]{babel}
\usepackage{amsmath,latexsym,amsthm,amsfonts,tipa,upgreek}
\usepackage{amssymb,amscd,graphpap, stmaryrd}

\textwidth=18cm \textheight=25.2cm \hoffset=-3cm \voffset=-1.5cm

\newcommand\NN{{\mathbb N}}

\newcommand\RR{{\mathbb R}}
\newcommand\w{{\omega}}
\newcommand\ta{{\overline{\tau}}}

\newcommand\Tau{\mathcal{T}}

\newcommand\PP{{\mathcal P}}
\newcommand\FF{{\mathcal F}}

\newtheorem{Th}{Theorem}[section]

\newtheorem{Qs}{Question}[section]
\newtheorem{Cr}{Corollary}[section]
\theoremstyle{definition}

\begin{document}

\title{Relative size of subsets of a semigroup}
\author{Igor Protasov and Serhii Slobodianiuk}
\subjclass[2010]{05D10, 20M12, 22A15.}
\keywords{large, thick and prethick subsets of a semigroup; ultrafilters; the Stone-$\check{C}$ech compactification of a discrete semigroups; minimal left ideal.}
\date{}
\address{Department of Cybernetics, Kyiv University, Volodymyrska 64, 01033, Kyiv, Ukraine}
\email{i.v.protasov@gmail.com; }
\address{Department of Mechanics and Mathematics, Kyiv University, Volodymyrska 64, 01033, Kyiv, Ukraine}
\email{slobodianiuk@yandex.ru}
\maketitle

\begin{abstract} Given a semigroup $S$, we introduce relative (with respect to a filter $\tau$ on $S$) versions of large, thick and prethick subsets of $S$, give the ultrafilter characterizations of these subsets and explain how large could be some cell in a finite partition of a subset $A\in\tau$. \end{abstract}

\section{Introduction}
For a semigroup $S$, $a\in S$, $A\subseteq S$ and $B\subseteq S$, we use the standard notations
$$a^{-1}B=\{x\in S:ax\in B\}, \ \ A^{-1}B=\bigcup_{a\in A}a^{-1}B,$$
$$[A]^{<\w}=\{F\subseteq A, F\text{ is finite}\}.$$
A subset $A$ of $S$ is called
\begin{itemize}
\item{} {\em large} if there exists $F\in [S]^{<\w}$ such that $S=F^{-1}A$;
\item{} {\em thick} if, for every $F\in [S]^{<\w}$, there exists $x\in S$ such that $Fx\subseteq A$;
\item{} {\em prethick} if $F^{-1}A$ is thick for some $F\in[S]^{<\w}$;
\item{} {\em small} if $L\setminus A$ is large for any large subset $L$.
\end{itemize}
In the dynamical terminology \cite[p.101]{b8}, large and prethick subsets are known as syndedic and piecewise syndedic.
These and several other combinatorially rich subsets of a semigroup are intensively studied in connection with the Ramsey Theory (see \cite[Part III]{b8}). In \cite{b6}, large, thick and prethick subsets are called right syndedic, right thick and right piecewise syndedic.

The names large and small subsets of a group appeared in \cite{b4}, \cite{b5} with additional adjective "left". Unexplicitely, thick subsets were used in \cite{b11} to partition an infinite totally bounded topological group $G$ into $|G|$ dense subsets.
For more delicate classification of subsets of a group by their size, we address the reader to \cite{b3}, \cite{b9}, \cite{b10}, \cite{b14}, \cite{b17}, \cite{b18}.

In frames of general asymptology \cite[Chapter 9]{b20}, large and thick subsets of a group could be considered as counterparts of dense and open subsets of a topological space.

Our initial motivation to this note was a desire to refine and generalize to semigroup the following statement \cite[Corollary 3.4]{b13}: if a neighborhood $U$ of the identity $e$ of a topological group $G$ is finitely partitioned then there exists a cell $A$ of the partition and a finite subset $F\subset U$ such that $FAA^{-1}$ is a neighborhood of $e$. On this way, we run to some relative (with respect to a filter) versions of above definitions.

Let $S$ be a semigroup and let $\tau$ be a filter on $S$. We say that a subset $A$ of $S$ is
\begin{itemize}
\item{} {\em $\tau$-large} if, for every $U\in\tau$, there exists $F\subseteq[U]^{<\w}$ such that $F^{-1}A\in\tau$;
\item{} {\em $\tau$-thick} if there exists $U\in\tau$ such that, for any $F\in[U]^{<w}$ and $V\in\tau$, one can find $x\in V$ such that $Fx\subseteq A$;
\item{} {\em $\tau$-prethick} if, for every $U\in\tau$, there exists $F\in[U]^{<\w}$ such that $F^{-1}A$ is $\tau$-prethick;
\item{} {\em $\tau$-small} if $L\setminus A$ is $\tau$-large for every $\tau$-large subset $L$.
\end{itemize}
In the case $\tau=\{S\}$, we omit $\tau$ and get the seminal classification of subsets of $S$ by their size.

To conclude the introduction, we need some algebra in the Stone-$\check{C}$ech compactifications from \cite{b8}.

For a discrete semigroup $S$. we consider the Stone-$\check{C}$ech compactification $\beta S$ of $S$ as the set of all ultrafilters on $S$, identifying $S$ with the set of all principal ultrafilters, and denote $S^*=\beta S\setminus S$.
For a subset $A$ of $S$ and a filter $\tau$ on $S$, we set
$$\overline{A}=\{p\in\beta S:A\in p\},\ \ \ta=\bigcap\{\overline{A}:A\in\tau\}=\{p\in\beta S:\tau\subseteq p\}$$
and note that the family $\{\overline{A}:A\subseteq S\}$ forms base for the open sets on $\beta S$, and each non-empty closed subset in $\beta S$ is of the form $\overline{\tau}$ for an appropriate filter $\tau$ on $S$.

The universal property of the Stone-$\check{C}$ech compactifications of discrete spaces allows to extend multiplication from $S$ to $\beta S$ in such  way that, for any $p\in\beta S$ and $g\in S$ the shifts $x\mapsto xp$ and $x\mapsto gx$, $x\in \beta S$ are continuous.

For any $A\subseteq S$ and $q\in \beta S$, we denote
$$A_q=\{x\in S:x^{-1}A\in q\}.$$
Then formally the product $pq$ of ultrafilters $p$ and $q$ can be defined \cite[p.89]{b8} by the rule: $$A\in pq\leftrightarrow A_q\in p.$$
In this note, we give the ultrafilter characterizations of $\tau$-large and $\tau$-thick subsets (section~$2$) and $\tau$-prethick subsets (section~$3$) in spirit of \cite{b6}, \cite{b8}, \cite{b18}. If $\tau$ is a subsemigroup of $\beta S$, we describe the minimal left ideal of $\overline{\tau}$ to understand how big could be the cells in a finite partition of a subset $A\in\tau$.

\section{Relatively large and thick subsets}~\label{s2}
Let $\tau$ be a filter on a semigroup $S$.
\begin{Th}\label{t2.1}
A subset $L$ of $S$ is $\tau$-large if and only if, for every $p\in\overline{\tau}$ and $U\in\tau$, we have $L_p\cap U\neq\varnothing$.
\end{Th}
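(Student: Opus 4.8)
The plan is to verify each implication by directly unfolding the definitions of $\tau$-large, of $L_p=\{x\in S:x^{-1}L\in p\}$, and of $F^{-1}L=\bigcup_{a\in F}a^{-1}L$, together with the basic facts that a filter is closed under finite intersections and supersets, and that an ultrafilter containing a finite union contains one of the terms.

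\emph{Necessity.} Assume $L$ is $\tau$-large and fix $p\in\overline{\tau}$ and $U\in\tau$. Applying $\tau$-largeness to $U$ yields $F\in[U]^{<\w}$ with $F^{-1}L\in\tau$; since $\tau\subseteq p$ we get $F^{-1}L\in p$. As $F^{-1}L=\bigcup_{a\in F}a^{-1}L$ is a finite union, some $a\in F$ has $a^{-1}L\in p$, i.e. $a\in L_p$; and $a\in F\subseteq U$, so $L_p\cap U\neq\varnothing$.

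\emph{Sufficiency.} I would argue by contraposition. Suppose $L$ is not $\tau$-large, so there is $U\in\tau$ with $F^{-1}L\notin\tau$ for every $F\in[U]^{<\w}$. Since $\tau$ is a filter, $F^{-1}L\notin\tau$ is equivalent to: $V\not\subseteq F^{-1}L$, equivalently $V\cap(S\setminus F^{-1}L)\neq\varnothing$, for all $V\in\tau$. Using $S\setminus F^{-1}L=\bigcap_{a\in F}(S\setminus a^{-1}L)$ and closure of $\tau$ under finite intersections, this is exactly the assertion that the family
$$\mathcal{G}=\tau\cup\{S\setminus a^{-1}L:a\in U\}$$
has the finite intersection property. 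Take any ultrafilter $p\supseteq\mathcal{G}$. Then $\tau\subseteq p$, so $p\in\overline{\tau}$; and for each $a\in U$ we have $S\setminus a^{-1}L\in p$, hence $a^{-1}L\notin p$, hence $a\notin L_p$. Thus $L_p\cap U=\varnothing$, which is the negation of the right-hand condition.

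The necessity direction is routine. The one step deserving care is the reduction in the sufficiency direction: recognizing that the failure of $\tau$-largeness witnessed by a single $U\in\tau$ is precisely the finite intersection property of $\mathcal{G}$, so that a single ultrafilter $p\in\overline{\tau}$ with $L_p\cap U=\varnothing$ can be extracted. I do not anticipate a genuine obstacle beyond this bookkeeping with the filter axioms.
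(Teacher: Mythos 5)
Your proof is correct and follows essentially the same route as the paper: the necessity direction is the same finite-union/ultrafilter argument, and the sufficiency direction is the same contrapositive, where you merely make explicit the finite-intersection-property verification that the paper leaves implicit when it asserts the existence of an ultrafilter $p\in\overline{\tau}$ containing all sets $S\setminus F^{-1}L$.
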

\begin{proof}
We suppose that $L$ is $\tau$-large and take arbitrary $p\in\ta$ and $U\in\tau$.
We choose $F\in[U]^{<\w}$ such that $F^{-1}L\in\tau$.
Since $F^{-1}L=\bigcup_{g\in F} g^{-1}L$, there exists $g\in F$ such that $g^{-1}F\in p$ so $g\in L_p$ and $L_p\cap U\neq\varnothing$.

To prove the converse statement, we assume that $L$ is not $\tau$-large and choose $U\in\tau$ such that $F^{-1}L\notin\tau$ for every $F\in[U]^{<w}$.
Then we take an ultrafilter $p\in\ta$ such that $G\setminus F^{-1}L\in p$ for each $F\in[U]^{<\w}$. Clearly, $g^{-1}L\notin p$ for every $g\in U$ so $U\cap L_p=\varnothing$.
\end{proof}
\begin{Th}\label{t2.2}
A subset $T$ of $S$ is $\tau$-thick if and only if there exists $p\in\ta$ such that $T_p\in\tau$.
\end{Th}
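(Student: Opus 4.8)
The plan is to translate the combinatorial requirement ``$Fx\subseteq T$'' into ultrafilter language via the elementary equivalence $x\in g^{-1}T\Leftrightarrow gx\in T$: finding a single $x$ with $Fx\subseteq T$ then amounts to finding a point of $\bigcap_{g\in F}g^{-1}T$, and demanding many such containments simultaneously amounts to forcing all the sets $g^{-1}T$ into one ultrafilter. So the heart of the matter is an application of the finite intersection property.

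\emph{Only if.} Suppose $T$ is $\tau$-thick and fix $U\in\tau$ witnessing this, so that for every $F\in[U]^{<\w}$ and every $V\in\tau$ there is $x\in V$ with $Fx\subseteq T$. Consider the family
$$\mathcal{B}=\tau\cup\{g^{-1}T:g\in U\}.$$
I would verify that $\mathcal{B}$ has the finite intersection property: a finite subfamily is contained in $V\cap\bigcap_{g\in F}g^{-1}T$ for a suitable $V\in\tau$ and $F\in[U]^{<\w}$, and the point $x\in V$ with $Fx\subseteq T$ supplied by $\tau$-thickness lies exactly in this intersection (when $F=\varnothing$ the intersection is just $V\neq\varnothing$). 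Extending $\mathcal{B}$ to an ultrafilter $p$, we get $\tau\subseteq p$, i.e. $p\in\ta$, and $g^{-1}T\in p$ for every $g\in U$, i.e. $U\subseteq T_p$; since $\tau$ is a filter and $U\in\tau$, this gives $T_p\in\tau$.

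\emph{If.} Suppose $p\in\ta$ and $T_p\in\tau$. I claim $U:=T_p$ witnesses $\tau$-thickness of $T$. Given $F\in[U]^{<\w}$ and $V\in\tau$, each $g\in F$ satisfies $g^{-1}T\in p$, hence $\bigcap_{g\in F}g^{-1}T\in p$; since $V\in\tau\subseteq p$, the set $V\cap\bigcap_{g\in F}g^{-1}T$ belongs to $p$, so it is non-empty, and any of its points $x$ satisfies $x\in V$ and $Fx\subseteq T$.

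The only genuine decision is in the first implication: one must choose the family whose finite intersection property \emph{is} the hypothesis of $\tau$-thickness, namely let the sets $g^{-1}T$ range over the \emph{whole} witness $U$ rather than some smaller set, and adjoin all of $\tau$. Once this is set up, both directions are routine and use nothing about the semigroup structure of $S$ beyond the defining equivalence $x\in g^{-1}T\Leftrightarrow gx\in T$ and the upward closure of the filter $\tau$.
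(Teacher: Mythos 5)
Your proof is correct and follows essentially the same route as the paper: both directions come down to observing that $\tau\cup\{g^{-1}T:g\in U\}$ has the finite intersection property (witnessed by the points $x$ supplied by $\tau$-thickness) and extending it to an ultrafilter $p\in\ta$ with $U\subseteq T_p$, and conversely reading off the witness $x$ from a member of $p$ contained in $V\cap\bigcap_{g\in F}g^{-1}T$. The paper merely packages the finite intersection property differently, via tail sets of a choice function on the directed set $[U]^{<\w}\times\tau$, but the underlying argument is the same.
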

\begin{proof}
We suppose that $T$ is $\tau$-thick and pick corresponding $U\in\tau$. The set $[U]^{<w}\times\tau$ is directed $\le$ by the rule:
$$(F,V)\le(F',V')\leftrightarrow F\subseteq F', V'\subseteq V.$$
For each pair $(F,V)$ we choose $g(F,V)\in V$ such that $Fg(F,V)\subseteq T$.
The family
$$P_{F,V}=\{g(F',V'):(F,V)\le (F',V')\}, \ \ (F,V)\in [U]^{<\w}\times \tau$$
is contained in some ultrafilter $p\in\ta$.
By the construction, $U\subseteq T_p$ so $T_p\in\tau$.

To prove the converse statement, we choose $p\in\ta$ such that $T_p\in\tau$.
Given any $F\in[T_p]^{<\w}$ and $V\in\tau$, we take $P\in p$ such that $P\subseteq V$ and $gP\subseteq T$ for each $g\in F$. Then we choose an arbitrary $x\in P$ and get $Fx\subseteq T$ so $T$ is $\tau$-thick.
\end{proof}
We say that a subset $T$ of $S$ is {\em $\tau$-extrathick} if $T_p\in\tau$ for each $p\in\ta$. 

By \cite[Theorem 2.4]{b6}, a subset $T$ is thick if and only if $T$ intersects each large subset non-trivially.
In the case $\tau=\{G\}$, this is a partial case of the following theorem.
\begin{Th}\label{t2.3}
If each subset $U\in\tau$ is $\tau$-extrathick then a subset $T$ of $S$ is $\tau$-thick if and only if $T\cap L\cap U\neq\varnothing$ for any $\tau$-large subset $L$ and $U\in \tau$.
\end{Th}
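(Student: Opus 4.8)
The plan is to translate both implications into the ultrafilter language of Theorems~\ref{t2.1} and~\ref{t2.2}. At the outset I would record the two elementary identities $(A\cap B)_p=A_p\cap B_p$ and $(S\setminus A)_p=S\setminus A_p$, valid for every $p\in\beta S$ (the first because $p$ is a filter, the second because $p$ is an ultrafilter); in particular, since $\tau$ is a filter, ``$U$ is $\tau$-extrathick'' says exactly that $U_p\in\tau$ for every $p\in\overline{\tau}$.

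For the ``only if'' direction, suppose $T$ is $\tau$-thick and fix a $\tau$-large set $L$ together with some $U\in\tau$. By Theorem~\ref{t2.2} choose $p\in\overline{\tau}$ with $T_p\in\tau$. Since $U\in\tau$, the standing hypothesis gives $U_p\in\tau$, whence $(T\cap U)_p=T_p\cap U_p\in\tau$. Applying Theorem~\ref{t2.1} to the $\tau$-large set $L$ with the member $(T\cap U)_p\in\tau$ produces some $g\in L_p\cap(T\cap U)_p$, so that $g^{-1}L\in p$ and $g^{-1}(T\cap U)\in p$; their intersection belongs to $p$, hence is nonempty, and any $x$ in it satisfies $gx\in L\cap T\cap U$. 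Thus $T\cap L\cap U\neq\varnothing$.

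For the ``if'' direction I would argue by contraposition, exhibiting one $\tau$-large set and one member of $\tau$ that witness failure of the right-hand side whenever $T$ is not $\tau$-thick. Indeed, by Theorem~\ref{t2.2} we then have $T_p\notin\tau$ for every $p\in\overline{\tau}$; since $\tau$ is upward closed, no $W\in\tau$ can be contained in $T_p$, i.e.\ $W\cap(S\setminus T_p)\neq\varnothing$ for all $W\in\tau$ and all $p\in\overline{\tau}$. Because $(S\setminus T)_p=S\setminus T_p$, Theorem~\ref{t2.1} shows that $S\setminus T$ is $\tau$-large, and then $L=S\setminus T$ together with $U=S\in\tau$ gives $T\cap L\cap U=\varnothing$.

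Once this dictionary is in place every step is routine; the only point that really uses the standing hypothesis — and the sole substantive move of the proof — is the passage from $T_p\in\tau$ to $(T\cap U)_p\in\tau$, which is precisely what lets the member $U$ of $\tau$ be carried along inside the characterization of $\tau$-large subsets. The converse, by contrast, does not invoke the extrathickness hypothesis at all, so I would double-check that it is genuinely needed only for the forward implication.
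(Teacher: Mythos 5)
Your proof is correct, and your forward implication coincides with the paper's argument: find $p\in\overline{\tau}$ with $T_p\in\tau$ via Theorem~\ref{t2.2}, use extrathickness of $U$ to get $U_p\in\tau$, and feed the set $T_p\cap U_p\in\tau$ into the characterization of Theorem~\ref{t2.1} to produce $g$ with $L,T,U\in gp$. Where you genuinely diverge is the converse, and your version is the one that actually does the job. The paper's second paragraph assumes both that $T\cap L\cap U=\varnothing$ for some $L,U$ \emph{and} that $T$ is $\tau$-thick, then derives a contradiction; logically this only re-establishes ``$\tau$-thick implies the intersection property'' in contrapositive form, and never shows that the intersection property forces $\tau$-thickness. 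Your contrapositive argument supplies exactly that missing step: if $T$ is not $\tau$-thick, then $T_p\notin\tau$ for every $p\in\overline{\tau}$ by Theorem~\ref{t2.2}, so (since $\tau$ is upward closed) $W\cap(S\setminus T_p)\neq\varnothing$ for all $W\in\tau$, and the ultrafilter identity $(S\setminus T)_p=S\setminus T_p$ converts this, via Theorem~\ref{t2.1}, into the statement that $S\setminus T$ is $\tau$-large; the pair $L=S\setminus T$, $U=S$ then witnesses the failure of the intersection property. Your closing observation is also accurate: extrathickness is invoked only in the forward direction, and your converse needs nothing about $\tau$ beyond its being a filter.
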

\begin{proof}
We assume that $T$ is $\tau$-thick and use Theorem~\ref{t2.2} to find $p\in\ta$ such that $T_p\in\tau$.
We take an arbitrary $\tau$-large subset $L$ and $U\in\tau$. Since $U$ is $\tau$-extrathick, we have $U_p\in\tau$. 
By Theorem~\ref{t2.1}, $L_p\cap(T_p\cap U_p)\neq\varnothing$.
If $g\in L_p\cap T_p\cap U_p$ then $L\in gp$, $T\in gp$, $U\in gp$. 
Hence, $T\cap L\cap U\neq\varnothing$.

We suppose that $T\cap L\cap U=\varnothing$ for some $\tau$-large subset $L$ and $U\in\tau$ but $T$ is $\tau$-thick. We take $p\in\ta$ such that $T_p\in\tau$. Since $U$ is $\tau$-extrathick, we have $U_p\in\tau$. By Theorem~\ref{t2.1}, $L_p\cap(T_p\cap U_p)\neq\varnothing$.
If $g\in L_p\cap T_p\cap U_p$ then $L\in gp$, $T\in gp$, $U\in gp$. 
Hence, $T\cap L\cap U\neq\varnothing$ and we get a contradiction.
\end{proof}
\begin{Th}\label{t2.4}
Let $g\in S$ and let $\tau$ be a filter on $S$ such that $g^{-1}U\in\tau$ for each $U\in\tau$.
If a subset $L$ of $S$ is $\tau$-large and a subset $T$ of $S$ is $\tau$-thick then $gL$ and $g^{-1}T$ are $\tau$-large and $\tau$-thick respectively.
\end{Th}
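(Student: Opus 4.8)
The plan is to verify both assertions directly from the definitions, without invoking Theorems~\ref{t2.1} and~\ref{t2.2}; the only ambient facts needed are associativity in $S$ (so that $g(Fx)=(gF)x$ for $F\in[S]^{<\w}$ and $x\in S$) and the elementary inclusion $f^{-1}L\subseteq(gf)^{-1}(gL)$ for each $f\in S$, which gives $F^{-1}L\subseteq(gF)^{-1}(gL)$. The role of the hypothesis ``$g^{-1}U\in\tau$ for every $U\in\tau$'' is to let us trade a test set $U\in\tau$ for $g^{-1}U\in\tau$, and thereby move the finite sets witnessing $\tau$-largeness and $\tau$-thickness between $[U]^{<\w}$ and $[g^{-1}U]^{<\w}$ via left translation by $g$.

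For $\tau$-largeness of $gL$ I would fix $U\in\tau$, note $g^{-1}U\in\tau$, and apply $\tau$-largeness of $L$ to the set $g^{-1}U$ to obtain $F\in[g^{-1}U]^{<\w}$ with $F^{-1}L\in\tau$. Then $gF\in[U]^{<\w}$ (since $f\in g^{-1}U$ means $gf\in U$), and $(gF)^{-1}(gL)\supseteq F^{-1}L\in\tau$, so $(gF)^{-1}(gL)\in\tau$ because $\tau$ is upward closed. As $U\in\tau$ was arbitrary, this shows $gL$ is $\tau$-large.

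For $\tau$-thickness of $g^{-1}T$, let $U\in\tau$ be a set witnessing $\tau$-thickness of $T$; I claim $g^{-1}U\in\tau$ witnesses $\tau$-thickness of $g^{-1}T$. Given $F\in[g^{-1}U]^{<\w}$ and $V\in\tau$, we have $gF\in[U]^{<\w}$ (again because $f\in g^{-1}U$ gives $gf\in U$), so there is $x\in V$ with $(gF)x\subseteq T$; by associativity $(gF)x=g(Fx)$, hence $Fx\subseteq g^{-1}T$. This is precisely the condition required of the set $g^{-1}U$, so $g^{-1}T$ is $\tau$-thick.

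I do not expect a genuine obstacle here: the substance is just recognizing that the hypothesis on $\tau$ is exactly what makes left translation by $g$ carry the combinatorial data back and forth. The one point meriting a word of care is that $F^{-1}L\subseteq(gF)^{-1}(gL)$ need not be an equality (left translations of a semigroup need not be injective), but since $\tau$ is a filter the one-sided inclusion is all that is used.
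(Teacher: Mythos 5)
Your proof is correct and follows essentially the same route as the paper's: in both arguments one replaces the test set $U\in\tau$ by $g^{-1}U\in\tau$ (the paper phrases this as choosing $V\in\tau$ with $gV\subseteq U$), pushes the finite witness forward by left translation, and uses the inclusion $F^{-1}L\subseteq(gF)^{-1}(gL)$ together with upward closedness of $\tau$. Your explicit caveat that this inclusion need not be an equality in a semigroup is a point the paper glosses over (it asserts equality), so your version is if anything slightly more careful.
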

\begin{proof}
To prove that $gL$ is $\tau$-large, we take an arbitrary $U\in\tau$ and choose $V\in\tau$ such that $gV\subseteq U$ (using $g^{-1}U\in\tau$).
Since $L$ is $\tau$-large, there is $F\in[V]^{<\w}$ such that $F^{-1}L\in\tau$.
We note that $F^{-1}L=(gF)^{-1}gF$.
Since $gF\in[U]^{<\w}$, we conclude that $gF$ is $\tau$-large.

To see that $g^{-1}T$ is $\tau$-thick, we pick $U\in\tau$ such that, for every $F\in[U]^{<\w}$ and $W\in\tau$, there is $x\in W$ such that $Fx\subseteq T$.
We choose $V\in\tau$ such that $gV\subseteq U$.
Then we take an arbitrary $H\in[V]^{<w}$ and $W\in\tau$.
Since $gH\in[U]^{<\w}$, there exists $y\in W$ such that $gHy\subseteq T$ so $Hy\subseteq g^{-1}T$ and $g^{-1}T$ is $\tau$-thick.
\end{proof}
We say that a family $\FF$ of subsets of $S$ is {\em left (left inverse) invariant} if, for any $A\in\FF$ and $g\in S$, we have $gA\in\FF$ ($g^{-1}A\in\FF$).
\begin{Cr} If $\tau$ is inverse invariant then the family of all $\tau$-large ($\tau$-thick) subsets is left (left inverse) invariant.
\end{Cr}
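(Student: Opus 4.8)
The plan is to read the Corollary straight off Theorem~\ref{t2.4}. The first step is to unpack what "inverse invariant" means for $\tau$: by the definition just given, a family is left inverse invariant when it contains $g^{-1}A$ for every member $A$ and every $g\in S$, so saying that the filter $\tau$ is inverse invariant is precisely the statement that $g^{-1}U\in\tau$ for all $U\in\tau$ and all $g\in S$. In other words, the standing hypothesis of Theorem~\ref{t2.4} — which there is imposed for one particular $g$ — now holds \emph{uniformly} in $g$.

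Granting that observation, the two assertions follow by fixing an arbitrary $g\in S$ and invoking Theorem~\ref{t2.4}. If $L$ is $\tau$-large, the theorem yields that $gL$ is $\tau$-large; since $g$ and $L$ were arbitrary, the family of all $\tau$-large subsets is left invariant. Symmetrically, if $T$ is $\tau$-thick, the theorem yields that $g^{-1}T$ is $\tau$-thick; since $g$ and $T$ were arbitrary, the family of all $\tau$-thick subsets is left inverse invariant. No ultrafilter machinery or fresh combinatorics is needed — everything has already been done inside Theorem~\ref{t2.4}.

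I expect essentially no obstacle here; the only point demanding a little care is bookkeeping the two distinct invariance notions: it is \emph{left} invariance (closure under $A\mapsto gA$) that holds for $\tau$-large sets and \emph{left inverse} invariance (closure under $A\mapsto g^{-1}A$) that holds for $\tau$-thick sets, matching respectively the $gL$-conclusion and the $g^{-1}T$-conclusion of Theorem~\ref{t2.4}. One might also remark in passing that the same argument shows the hypothesis "$g^{-1}U\in\tau$ for each $U\in\tau$" is needed in Theorem~\ref{t2.4} only for the specific $g$ under consideration, whereas for the Corollary one simply quantifies over all $g$.
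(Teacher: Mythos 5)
Your proposal is correct and is exactly the argument the paper intends: the Corollary is stated without proof as an immediate consequence of Theorem~\ref{t2.4}, obtained by observing that inverse invariance of $\tau$ makes the hypothesis $g^{-1}U\in\tau$ hold for every $g\in S$ simultaneously. Your bookkeeping of which invariance notion (left versus left inverse) attaches to which family matches the statement.
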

\begin{Th}\label{t2.6}
Let $\tau$ be a filter on $S$ such that, for every $U\in\tau$, we have $\{g\in S:g^{-1}U\in\tau\}\in\tau$.
If $T$ is $\tau$-thick then there exists $V\in\tau$ such that $g^{-1}T$ is $\tau$-thick for every $g\in V$. 
\end{Th}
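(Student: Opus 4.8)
The plan is to reduce the statement to the ultrafilter characterization of $\tau$-thickness provided by Theorem~\ref{t2.2}. Since $T$ is $\tau$-thick, I would first fix an ultrafilter $p\in\overline{\tau}$ with $T_p\in\tau$. The crucial observation is the identity
$$(g^{-1}T)_p=g^{-1}(T_p)\qquad\text{for every }g\in S.$$
To check it, note that $x\in(g^{-1}T)_p$ means $x^{-1}(g^{-1}T)\in p$; since $x^{-1}(g^{-1}T)=\{y\in S:g(xy)\in T\}=(gx)^{-1}T$ by associativity, this is equivalent to $(gx)^{-1}T\in p$, i.e. to $gx\in T_p$, i.e. to $x\in g^{-1}(T_p)$.

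Next I would apply the hypothesis on $\tau$ to $U=T_p\in\tau$: the set $V:=\{g\in S:g^{-1}(T_p)\in\tau\}$ belongs to $\tau$. For every $g\in V$ we then have $(g^{-1}T)_p=g^{-1}(T_p)\in\tau$, while the same ultrafilter $p$ still lies in $\overline{\tau}$. Hence Theorem~\ref{t2.2}, now applied to the subset $g^{-1}T$, yields that $g^{-1}T$ is $\tau$-thick, so $V$ is the required member of $\tau$.

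There is no serious obstacle here; the argument is short. The two points to handle with a little care are the computation of $(g^{-1}T)_p$ (which uses nothing beyond associativity and the definitions of $a^{-1}B$ and $A_q$) and the observation that one and the same ultrafilter $p$ serves simultaneously for all $g\in V$, so that no diagonalization over $g$ is needed. The hypothesis $\{g\in S:g^{-1}U\in\tau\}\in\tau$ is precisely what gathers the "good" left translates of the witnessing set $T_p$ into a single member of $\tau$.
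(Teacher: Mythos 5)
Your proof is correct, but it takes a genuinely different route from the paper. The paper argues directly from the definition of $\tau$-thickness: it takes the witnessing set $U\in\tau$ for $T$ (so that every finite $K\subseteq U$ admits translates inside $T$ landing in any prescribed $W\in\tau$), sets $V=\{g\in S:g^{-1}U\in\tau\}$, and checks that for $g\in V$ the set $V_g=g^{-1}U$ witnesses the $\tau$-thickness of $g^{-1}T$, since $F\in[V_g]^{<\w}$ gives $gF\in[U]^{<\w}$ and $gFx\subseteq T$ translates to $Fx\subseteq g^{-1}T$. You instead route everything through the ultrafilter characterization of Theorem~\ref{t2.2}, and the key computation $(g^{-1}T)_p=g^{-1}(T_p)$ is verified correctly (it is exactly the associativity calculation $x^{-1}(g^{-1}T)=(gx)^{-1}T$). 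The hypothesis is then applied to $U=T_p$ rather than to the combinatorial witness set. Your version is slightly slicker and makes visible the extra information that one single ultrafilter $p$ simultaneously certifies the $\tau$-thickness of $g^{-1}T$ for all $g\in V$; the paper's version is more elementary in that it never leaves $S$ and does not depend on Theorem~\ref{t2.2}, paralleling instead the direct translation argument already used in the second half of Theorem~\ref{t2.4}. Both are complete proofs.
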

\begin{proof}
We take $U\in\tau$ such that, for any $K\in[U]^{<\w}$ and $W\in\tau$, we have $Kx\subseteq T$ for some $x\in W$.
Then we choose $V\in\tau$ such that, for every $g\in V$, there exists $V_g\in\tau$ with $gV_g\subseteq U$.
Given any $F\in[V_g]^{<\w}$ and $W\in\tau$, we pick $x\in W$ such that $gFx\subset T$ so $Fx\subseteq g^{-1}T$ and $g^{-1}T$ is $\tau$-thick.
\end{proof}
A topology $\Tau$ on a semigroup $S$ is called {\em left invariant} if each left shift $x\mapsto gx$, $g\in G$ is continuous (equivalently, the family $\Tau$ is left inverse invariant).

We assume that $S$ has identity $e$ and say that a filter $\tau$ on $S$ is {\em left topological} if $\tau$ is the filter of neighborhoods of $e$ for some (unique in the case if $S$ is a group) left invariant topology $\Tau$ on $S$.

Let $\tau$ be a left topological filter on $S$. 
Then each subset $U\in\tau$ is $\tau$-extrathick and $\tau$ satisfies Theorem~\ref{t2.6}.
Hence, Theorems~\ref{t2.3} and~\ref{t2.6} hold for $\tau$.

We show that Theorem~\ref{t2.6} needs not to be true with $\tau$-large subsets in place of $\tau$-thick subsets even if $\tau$ is a filter on neighborhoods of the identity for some topological group.

We endow $\RR$ with the natural topology, denote $\RR^+=\{r\in\RR, r>0\}$ and take the filter $\tau$ of neighborhoods of $0$.
The set $\RR^+$ is $\tau$-large because $\RR^+-x\in\tau$ for each $x\in\RR^+$.
On the other hand, $\RR^++x$ is not $\tau$-large for each $x\in\RR^+$.

\section{Relatively prethick subsets}
We say that a filter $\tau$ on $S$ is a {\em semigroup filter} if $\ta$ is a subsemigroup of the semigroup $\beta S$ and note that, if either $\tau$ is inverse left invariant or $S$ has the identity and $\tau$ is left topological then $\tau $ is a semigroup filter.

In the case $\tau=\{S\}$, the following statement is Theorem~$4.39$ from \cite{b8}.
\begin{Th}\label{t3.1}
Let $\tau$ be a semigroup filter on $S$.
An ultrafilter $p\in\ta$ belongs to some minimal left ideal $L$ of $\ta$ if and only if, for each $A\in p$, the set $A_p$ is $\tau$-large.
\end{Th}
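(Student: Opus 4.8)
The plan is to adapt the proof of Theorem~$4.39$ in \cite{b8}, replacing $\beta S$ by the subsemigroup $\ta$ and the word ``syndetic'' by ``$\tau$-large'', with Theorem~\ref{t2.1} serving as the dictionary between the two. Since $\ta$ is a closed subsemigroup of $\beta S$ and the maps $x\mapsto xq$ are continuous, $\ta$ is itself a compact right topological semigroup; hence it has a smallest ideal $K(\ta)$ which coincides with the union of all minimal left ideals of $\ta$, and every left ideal of $\ta$ contains a minimal left ideal. These are the only general facts about compact right topological semigroups I would use. The one computation I would isolate at the start is the identity
$$A_{qp}=(A_p)_q\qquad(A\subseteq S,\ p,q\in\beta S),$$
which is an elementary unravelling of the definitions: using associativity in $S$, both sides equal $\{x\in S:\{y\in S:(xy)^{-1}A\in p\}\in q\}$. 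Because of it, ``$A_p$ is $\tau$-large'' is, by Theorem~\ref{t2.1}, exactly the statement that $A_{qp}\cap U=(A_p)_q\cap U\neq\varnothing$ for every $q\in\ta$ and $U\in\tau$.

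For the forward implication I would argue as follows. Suppose $p$ lies in a minimal left ideal $L$ of $\ta$. Then $\ta p$ is a nonempty left ideal contained in $L$, so $\ta p=L$. Fix $A\in p$, $q\in\ta$, $U\in\tau$; I must check $(A_p)_q\cap U\neq\varnothing$. Now $qp\in\ta p=L$, so by minimality $\ta(qp)=L\ni p$, and therefore $p=s(qp)=(sq)p$ for some $s\in\ta$. Then $A\in(sq)p$ forces $A_p\in sq$ and hence $(A_p)_q\in s$; since $\tau\subseteq s$ we also have $U\in s$, so $(A_p)_q\cap U\in s$ and in particular it is nonempty. By Theorem~\ref{t2.1}, $A_p$ is $\tau$-large.

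For the converse, assume $A_p$ is $\tau$-large for every $A\in p$. The heart of the argument is to show that $p\in\ta(qp)$ for every $q\in\ta$. The set $\ta(qp)$ is closed, being the image of the compact set $\ta$ under the continuous map $x\mapsto x(qp)$, and $\{\overline{A}:A\in p\}$ is a base of neighbourhoods of $p$; so it suffices to produce, for each $A\in p$, some $s\in\ta$ with $A\in s(qp)$. By Theorem~\ref{t2.1} and the identity above we have $A_{qp}\cap U\neq\varnothing$ for all $U\in\tau$, so, $\tau$ being a filter, the family $\{A_{qp}\}\cup\tau$ has the finite intersection property; any ultrafilter $s$ extending it satisfies $\tau\subseteq s$ (so $s\in\ta$) and $A_{qp}\in s$, i.e.\ $A\in s(qp)$. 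This proves $p\in\ta(qp)$ for all $q\in\ta$. To conclude, note that $\ta p$ is a nonempty left ideal of $\ta$, hence contains a minimal left ideal $L'$; pick $r\in L'$ and write $r=qp$ with $q\in\ta$. Then $p\in\ta(qp)=\ta r$, while $\ta r$ is a nonempty left ideal contained in the minimal left ideal $L'$, so $\ta r=L'$; therefore $p\in L'\subseteq K(\ta)$, i.e.\ $p$ lies in a minimal left ideal of $\ta$.

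The step I expect to demand the most care is the last one in the converse: the minimal left ideal $L'$ sitting inside $\ta p$ need not contain $p$ to begin with, and the role of the relation $p\in\ta(qp)$ is precisely to drag $p$ into the minimal left ideal generated by a point of $L'$. The rest is routine manipulation of ultrafilter products together with the translation supplied by Theorem~\ref{t2.1}.
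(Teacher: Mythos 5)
Your proof is correct. It follows the same overall strategy as the paper (both are adaptations of Theorem~4.39 of \cite{b8} to the compact right topological semigroup $\ta$), but the two halves are organized differently. In the forward direction the paper verifies the definition of $\tau$-largeness of $A_p$ directly: for each $r\in\ta$ it uses minimality to write $p=q_rrp$, extracts a point $x_r\in U$ and a set $B_r\in r$, and then uses compactness of $\ta$ to take a finite subcover producing the finite set $F\subseteq U$ with $F^{-1}A_p\in\tau$. You instead route everything through Theorem~\ref{t2.1} together with the identity $A_{qp}=(A_p)_q$, so a single application of minimality gives $(A_p)_q\cap U\neq\varnothing$ and no finite-subcover argument is needed; the non-constructive content is simply absorbed into the proof of Theorem~\ref{t2.1}. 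In the converse the paper argues contrapositively, separating $p$ from the closed set $\ta rp$ by a basic clopen set $\overline{A}$ and contradicting the $\tau$-largeness of $A_p$; you run the same argument forwards, showing $p\in\ta(qp)$ for all $q\in\ta$ by meeting every basic neighbourhood of $p$. A point in your favour: you make explicit the final step of passing from ``$p\in\ta rp$ for all $r\in\ta$'' to membership in a minimal left ideal (choosing a minimal left ideal $L'\subseteq\ta p$, a point $r=qp\in L'$, and concluding $p\in\ta r=L'$), whereas the paper compresses this into the phrase ``suppose $\ta p$ is not minimal,'' which silently conflates ``$p$ lies in no minimal left ideal'' with ``$\ta p$ is not minimal'' (these need a word of justification when $p\notin\ta p$). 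Your version is slightly longer but tighter at exactly the places where the paper's write-up is loose.
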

\begin{proof}
Let $L$ be a minimal left ideal of $\ta$, $p\in L$, $A\in p$ and $U\in\tau$.
Clearly, $L=\ta p$.
We take an arbitrary $r\in\tau$.
By the minimality of $L$, $\ta rp=\ta p$, so there exists $q_r\in\tau$ such that $q_r rp=p$.
Since $A\in q_r rp$ and $U\in q_r$, by the definition of the multiplication in $\beta S$, there exists $B_r\in r$ such that $\overline{B}_rp\subseteq\overline{x_r^{-1}A}$.
We consider the open cover $\{\overline{B_r},r\in\ta\}$ of the compact space $\ta$ and choose its finite subcover 
$\{\overline{B}_r:r\in K\}$.
We put $B=\bigcup_{r\in K}B_r$, $F=\{x_r,r\in K\}$.
Then $B\in\tau$ and $B\subseteq (F^{-1}A)_p$. By the choice, $F\subseteq U$.
Since $p$ is an ultrafilter, we have $(F^{-1}A)_p=F^{-1}A_p$.
Hence, $A_p$ is $\tau$-large.

To prove the converse statement we suppose that $\ta p$ is not minimal and choose $r\in\ta$ such that $p\notin\ta rp$.
Since the subset $\tau rp$ is closed in $\ta$, there exists $A\in p$ with $\overline{A}\cap\ta rp =\varnothing$.
It follows that $A\notin qrp$ for every $q\in\ta$.
Hence, $S\setminus A\in qrp$ for every $q\in\ta$.
It follows that there exists $U\in\tau$ such that $x^{-1}(G\setminus A)\in rp$ for each $x\in U$.
By the assumption, there exists $F\in[U]^{<\w}$ such that $F^{-1}A\in qp$ for every $q\in\ta$.
In particular, $x^{-1}A\in rp$ for some $x\in F$ and we get a contradiction.
\end{proof}
\begin{Cr}\label{C1}
Let $\tau$ be a semigroup filter on $S$ and let $p\in\ta$ belongs to some minimal left ideal of $\ta$. Then every subset $A\in p$ is $\tau$-prethick.
\end{Cr}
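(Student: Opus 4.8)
The plan is to read off the conclusion directly from Theorem~\ref{t3.1} together with the ultrafilter description of $\tau$-thick sets in Theorem~\ref{t2.2}. Fix an ultrafilter $p$ lying in some minimal left ideal of $\ta$ (such ideals exist, since $\ta$ is a nonempty closed subsemigroup of $\beta S$, hence a compact right-topological semigroup) and fix $A\in p$. By Theorem~\ref{t3.1} applied to this particular $p$ and $A$, the set $A_p$ is $\tau$-large.

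Now I would verify the defining property of $\tau$-prethickness: given an arbitrary $U\in\tau$, one must produce $F\in[U]^{<\w}$ with $F^{-1}A$ being $\tau$-thick. Applying the definition of $\tau$-large to the set $A_p$ and to this $U$ yields precisely such an $F\in[U]^{<\w}$ with $F^{-1}A_p\in\tau$. So it only remains to check that $F^{-1}A$ itself is $\tau$-thick, and for this I invoke Theorem~\ref{t2.2}: it suffices to exhibit some $q\in\ta$ with $(F^{-1}A)_q\in\tau$, and the natural candidate is $q=p$.

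The single computational point is the identity $(F^{-1}A)_p = F^{-1}A_p$, valid because $p$ is an ultrafilter and $F$ is finite: indeed $x^{-1}(F^{-1}A)=\bigcup_{g\in F}(gx)^{-1}A$ lies in $p$ exactly when some $(gx)^{-1}A$ does, which is the same identity already used in the proof of Theorem~\ref{t3.1}. Combining it with $F^{-1}A_p\in\tau$ gives $(F^{-1}A)_p\in\tau$, whence $F^{-1}A$ is $\tau$-thick by Theorem~\ref{t2.2}. Since $U\in\tau$ was arbitrary, $A$ is $\tau$-prethick. There is no genuine obstacle here: the argument is a short concatenation of the two earlier theorems with the bookkeeping identity for $(\cdot)_p$, and the only step needing a line of verification is that identity, which is the one place where both the finiteness of $F$ and the ultrafilter property of $p$ are used.
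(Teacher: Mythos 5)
Your argument is correct and matches the paper's own proof: both apply Theorem~\ref{t3.1} to get that $A_p$ is $\tau$-large, extract $F\in[U]^{<\w}$ with $F^{-1}A_p\in\tau$, and then use the identity $(F^{-1}A)_p=F^{-1}A_p$ together with Theorem~\ref{t2.2} to conclude that $F^{-1}A$ is $\tau$-thick. You merely spell out the bookkeeping identity (which the paper had already justified inside the proof of Theorem~\ref{t3.1}) a bit more explicitly.
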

\begin{proof}
Given an arbitrary $U\in\tau$, we use Theorem~\ref{t3.1} to find $F\in[V]^{<\w}$ such that $(F^{-1}A)_p\in\tau$. By Theorem~\ref{t2.2}, $F^{-1}A$ is $\tau$-thick. Hence, $A$ is $\tau$-prethick.
\end{proof}
\begin{Cr}\label{C2}
Let $\tau$ be a semigroup filter on a group $G$ and let $U\in\tau$.
Then, for every finite partition $\PP$ of $U$ and every $V\in\tau$, there exists $A\in\PP$ and $F\in[V]^{<\w}$ such that $F^{-1}AA^{-1}\in\tau$.
\end{Cr}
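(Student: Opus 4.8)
The plan is to derive this from Theorem~\ref{t3.1} (equivalently, from Corollary~\ref{C1}), strengthened by the elementary inclusion $A_p\subseteq AA^{-1}$ which holds whenever $A\in p$.

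First I would observe that $\ta$ is a nonempty closed subsemigroup of $\beta G$: it is nonempty because $\{\overline A:A\in\tau\}$ is a family of closed subsets of the compact space $\beta G$ with the finite intersection property (since $\overline A\cap\overline B=\overline{A\cap B}$ and $\tau$ is a filter), and it is a subsemigroup because $\tau$ is a semigroup filter. Being a closed subsemigroup of the compact right topological semigroup $\beta G$, $\ta$ is itself a compact right topological semigroup, hence it contains a minimal left ideal; I fix any $p$ in such an ideal. Since $U\in\tau\subseteq p$ and $\PP$ is a finite partition of $U$, there is a (unique) cell $A\in\PP$ with $A\in p$.

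Next, by Theorem~\ref{t3.1} the set $A_p=\{x\in G:x^{-1}A\in p\}$ is $\tau$-large. The key point is that $A_p\subseteq AA^{-1}$: if $x\in A_p$, then both $A$ and $x^{-1}A$ belong to the ultrafilter $p$, so $A\cap x^{-1}A\neq\varnothing$; choosing $y\in A$ with $xy\in A$ gives $x=(xy)y^{-1}\in AA^{-1}$. This is the only place where the group structure of $G$ is genuinely used.

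Finally, applying the definition of a $\tau$-large set to the given $V\in\tau$, I obtain $F\in[V]^{<\w}$ with $F^{-1}A_p\in\tau$. Since $A_p\subseteq AA^{-1}$, we have $F^{-1}A_p\subseteq F^{-1}AA^{-1}$, and as $\tau$ is a filter this yields $F^{-1}AA^{-1}\in\tau$, as claimed. I do not expect a real obstacle here; the only steps that require some care are the existence of a minimal left ideal in $\ta$ (a standard fact about compact right topological semigroups, already implicit in Section~3) and the inclusion $A_p\subseteq AA^{-1}$.
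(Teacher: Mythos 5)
Your argument is correct and is essentially the paper's own proof: both pick $p$ in a minimal left ideal of $\ta$, select the cell $A\in\PP$ with $A\in p$, invoke Theorem~\ref{t3.1} to get $F\in[V]^{<\w}$ with $F^{-1}A_p=(F^{-1}A)_p\in\tau$, and conclude via the observation that membership of $A$ and $x^{-1}A$ (or $x^{-1}F^{-1}A$) in the ultrafilter $p$ forces $x\in AA^{-1}$ (resp.\ $x\in F^{-1}AA^{-1}$). The only difference is that you apply the inclusion at the level of $A_p\subseteq AA^{-1}$ before taking $F^{-1}$, while the paper applies it directly to $(F^{-1}A)_p$; this is purely cosmetic.
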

\begin{proof}
We take $p$ from some minimal left ideal of $\ta$.
Then we choose $A\in\PP$ such that $A\in p$. 
Applying Theorem~\ref{t3.1}, we find $F\in[V]^{<\w}$ such that $(F^{-1}A)_p\in\tau$.
If $x\in(F^{-1}A)_p$ then $F^{-1}A\in xp$ and $x\in F^{-1}AA^{-1}$.
Hence, $F^{-1}AA^{-1}\in \tau$.
\end{proof}

In connection with Corollary~\ref{C2}, we would like to mention one of the most intriguing open problem in the subset combinatorics of groups \cite[Problem 13.44]{b12} possed by the first author in $1995$: given any group $G$, $n\in\NN$ and partition $\PP$ on $G$ into $n$ cells, do there exit $A\in\PP$ and $F\subseteq G$ such that $G=FAA^{-1}$ and $|F|\le n$? For recent state of this problem see the survey \cite{b2}.

On the other hand \cite{b1}, if an infinite group $G$ is either amenable or countable, then for every $n\in\NN$, there exists a partition $G=A\cup B$ such that $FA$ and $FB$ are not thick for each $F$ with $|F|\le n$. We do not know whether such a $2$-partition exists for 	any uncountable group $G$ and $n\in\NN$.
\begin{Th}\label{t3.2} Let $G$ be a group, $\tau$ be a filter of neighborhoods of the identity for some group topology on $G$ and $U\in\tau$. Then, for any partition $\PP$ of $U$, $|\PP|=n$ and $V\in\tau$, there exist $A\in\PP$ and $K\subseteq V$ such that $KAA^{-1}\in\tau$ and $K\le2^{2^{n-1}-1}$.
\end{Th}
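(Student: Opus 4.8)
The plan is to prove the statement by induction on $n=|\PP|$, in the sharper form that one may take $|K|\le f(n)$ with $f(1)=1$ and $f(n)=2f(n-1)^2$; a routine check ($2\cdot(2^{2^{n-2}-1})^2=2^{2^{n-1}-1}$) then gives $f(n)=2^{2^{n-1}-1}$, the bound claimed. The base case $n=1$ is immediate: take $A=U$ and $K=\{e\}$ (note $e\in V$); then $KAA^{-1}=UU^{-1}\supseteq U\in\tau$.

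For the inductive step write $U=A_1\cup\cdots\cup A_n$. If $A_iA_i^{-1}\in\tau$ for some $i$, take $A=A_i$, $K=\{e\}$, and we are done since $1\le f(n)$. So assume $A_iA_i^{-1}\notin\tau$ for every $i$. As $\tau$ is the neighbourhood filter of a group topology, fix a symmetric $W\in\tau$ with $WW\subseteq U\cap V$, and, using $A_1A_1^{-1}\notin\tau$, choose $v\in W$ with $vA_1\cap A_1=\varnothing$. Put $\widetilde{U}=U\cap v^{-1}U\in\tau$. Since $v(\widetilde{U}\cap A_1)\subseteq U\setminus A_1=A_2\cup\cdots\cup A_n$, distributing $\widetilde{U}\cap A_1$ among the remaining cells by $C_j=(\widetilde{U}\cap A_j)\cup\{x\in\widetilde{U}\cap A_1:vx\in A_j\}$ for $j=2,\dots,n$ produces a partition of $\widetilde{U}$ into $n-1$ cells.

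Now apply the inductive hypothesis to $\{C_j\}$, taking as neighbourhood parameter a sufficiently small $V'\subseteq V$ (small enough that the finitely many products formed below stay inside $V$). This yields an index $j_0$, a set $K_1\subseteq V'$ with $|K_1|\le f(n-1)$, and $W'\in\tau$ with $W'\subseteq K_1C_{j_0}C_{j_0}^{-1}$. Write $C_{j_0}=B'\sqcup B''$ with $B'=\widetilde{U}\cap A_{j_0}\subseteq A_{j_0}$ and $B''\subseteq A_1$, $vB''\subseteq A_{j_0}$. Expanding $C_{j_0}C_{j_0}^{-1}$ into its four pieces and using $B''=v^{-1}(vB'')$ one gets $B'(B')^{-1}\subseteq A_{j_0}A_{j_0}^{-1}$, $B''(B')^{-1}\subseteq v^{-1}A_{j_0}A_{j_0}^{-1}$, $B''(B'')^{-1}\subseteq A_1A_1^{-1}$, and the awkward $B'(B'')^{-1}\subseteq (A_{j_0}A_{j_0}^{-1})v$. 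Hence $W'$ is covered by $(K_1\cup K_1v^{-1})A_{j_0}A_{j_0}^{-1}$, by $K_1A_1A_1^{-1}$, and by $(K_1A_{j_0}A_{j_0}^{-1})v$; if either of the first two lies in $\tau$ we conclude with $|K|\le 2f(n-1)\le f(n)$.

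The remaining, genuinely stubborn, term is the right‑translate $(K_1A_{j_0}A_{j_0}^{-1})v$ — in effect a $v$‑conjugate of the difference set $A_{j_0}A_{j_0}^{-1}$ — and getting rid of it is the main obstacle. I would handle it by invoking the inductive hypothesis a \emph{second} time, applied to a companion partition of a neighbourhood of $e$ into $n-1$ cells built from $A_{j_0}$ and its $v$‑shift $v^{-1}A_{j_0}$, and then splicing the two conclusions so that the final covering set is a union of left translates of a single $A_iA_i^{-1}$; this second appeal to the hypothesis is precisely what upgrades $f(n-1)$ to $f(n-1)^2$, while the ensuing dichotomy (on whether a translate of an already‑constructed difference set belongs to $\tau$) accounts for the extra factor $2$. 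Everything else — symmetrizing neighbourhoods, choosing $V'$ and $W$, verifying that all auxiliary sets remain inside $V$ — is routine topological‑group bookkeeping, and the doubly‑exponential growth is simply what this "fold one cell per step" strategy produces: one squaring plus one doubling each time $n$ drops by $1$.
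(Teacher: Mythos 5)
Your reduction step is sound and well chosen: folding the cell $A_1$ into the remaining cells along a translate $v\in W$ with $vA_1\cap A_1=\varnothing$, so as to turn an $n$-cell partition of $U$ into an $(n-1)$-cell partition of $\widetilde{U}=U\cap v^{-1}U$, is exactly the right move, and your recursion $f(1)=1$, $f(n)=2f(n-1)^2$ does reproduce the stated bound $2^{2^{n-1}-1}$. But the proof is not complete, and the gap sits precisely at the step that carries all of the difficulty. After the inductive hypothesis yields $W'\subseteq K_1C_{j_0}C_{j_0}^{-1}$, you expand $C_{j_0}C_{j_0}^{-1}$ into four products and obtain a cover of $W'$ by left translates of $A_{j_0}A_{j_0}^{-1}$, left translates of $A_1A_1^{-1}$, and the right translate $\left(K_1A_{j_0}A_{j_0}^{-1}\right)v$. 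Two things go wrong here. First, ``if either of the first two lies in $\tau$ we conclude'' is not a legitimate dichotomy: from the fact that $W'$ is contained in a union of several sets you cannot infer that any particular sub-union belongs to the filter $\tau$, so there is no case split to run. Second, and more seriously, eliminating the right-translate term and merging the $A_1$- and $A_{j_0}$-contributions into left translates of the difference set of a \emph{single} cell is exactly the content of the inductive step, and you only gesture at it (``I would handle it by invoking the inductive hypothesis a second time \dots and then splicing the two conclusions''): no companion partition is constructed, no splicing argument is given, and it is not even specified what the second application of the hypothesis is applied to. As written, your argument establishes only that a Boolean combination of left and right translates of difference sets of two different cells covers a neighbourhood of $e$, which is strictly weaker than the assertion $KAA^{-1}\in\tau$.

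For comparison, the paper writes out only the case $n=2$: with $W W\subseteq U$, $C=A\cap W$ and $e\in B$, either $xC\cap C\neq\varnothing$ for all $x$ in some $H\in\tau$ (whence $CC^{-1}\in\tau$ and $K=\{e\}$ works), or some $g\in V\cap W$ satisfies $gC\cap C=\varnothing$, which pushes $gC$ into $B$ and gives $B\cup g^{-1}B\in\tau$; since $e\in B$ one has $B\subseteq BB^{-1}$ and the two-element set $\{e,g\}$ suffices. Note that even at $n=2$ the paper avoids your four-product expansion by using the inclusion $C_{j_0}\subseteq A_{j_0}\cup v^{-1}A_{j_0}$ together with $e\in B$ directly, rather than passing through $C_{j_0}C_{j_0}^{-1}$. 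For $n>2$ the paper itself gives no argument and refers the reader to the inductive scheme of \cite{b16}; that scheme is organized precisely so that the recombination you are missing is carried out by an explicit absorption step for products of the form $(A\cup vA)(A\cup vA)^{-1}$, and this is where the squaring $f(n-1)^2$ genuinely comes from. Until that step is written down and verified in the relative ($\tau$-localized) setting, your proof is incomplete at its central point.
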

\begin{proof}
We consider only the case $n=2$. 
For $n>2$, the reader can adopt the inductive arguments from \cite[pp. 120-121]{b16}, where this fact was proved for $\tau=\{G\}$. 
So let $U=A\cup B$ and $e\in B$. We choose $W\in\tau$ such that $WW\subseteq U$ and denote $C=A\cap W$. 
If there exists $H\in\tau$ such that $xC\cap C\neq\varnothing$ for each $x\in H$ then $CC^{-1}\in\tau$ and we put $F=\{e\}$, so $F^{-1}AA^{-1}\in\tau$. 
Otherwise, we take $g\in V\cap W$ such that $gC\cap C=\varnothing$. 
Then $gC\subseteq WW\subseteq U$, so $gC\subseteq B$ and $B\cup g^{-1}B\in\tau$. 
We put $F=\{e,g\}$. Since $e\in B$, we have $F^{-1}BB^{-1}\in\tau$.
\end{proof}
Recall that a family $\FF$ of subsets of a set $X$ is {\em partition regular} if, for every $A\in\FF$ and any finite partition of $A$, at least one cell of the partition is a member of $\FF$.

For a subsemigroup filter $\ta$ on $S$, we denote by $M(\ta)$ the union of all minimal left ideals of $\ta$.
In the case $\tau=\{G\}$, the following statement is Theorem~$4.40$ from \cite{b8}.
\begin{Th}\label{t3.5}
Let $\tau$ be left inverse invariant filter on a semigroup $S$. 
Then the following statements hold
\begin{itemize}
\item[(i)] a subset $A$ of $S$ is $\tau$-prethick if and only if $\overline{A}\cap M(\ta)\neq\varnothing$;
\item[(ii)] $P\in cl M(\ta)$ if and only if each $A\in p$ is $\tau$-prethick;
\item[(iii)] the family of all $\tau$-prethick subsets of $S$ is partition regular.
\end{itemize}
\end{Th}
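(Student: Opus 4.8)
The plan is to deduce all three parts from~(i), since once~(i) is available both~(ii) and~(iii) are short formal consequences; so the real content is the proof of~(i), and for that I would first assemble some standing facts about $\ta$. Because $\tau$ is left inverse invariant it is a semigroup filter, so $\ta$ is a nonempty closed subsemigroup of $\beta S$, hence a compact right-topological semigroup; by the standard structure theory (see \cite{b8}) it possesses minimal left ideals, these are closed, and $M(\ta)$ is their union. Two observations will do all the work. \emph{First}, left inverse invariance yields $g\,\ta\subseteq\ta$ for every $g\in S$: if $\tau\subseteq q$ then, for $U\in\tau$, one has $U\in gq$ exactly when $g^{-1}U\in q$, and $g^{-1}U\in\tau\subseteq q$. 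Consequently, if $L$ is a minimal left ideal of $\ta$ and $q\in L$, then $L=\ta q$, so $gL=(g\,\ta)q\subseteq\ta q=L$, and therefore $g\,M(\ta)\subseteq M(\ta)$ for all $g\in S$. \emph{Second}, by Theorem~\ref{t2.2}, if $T\subseteq S$ is $\tau$-thick then $\overline{T}$ contains a left ideal of $\ta$: picking $p\in\ta$ with $T_p\in\tau$, one gets $T\in rp$ for every $r\in\ta$ because $T_p\in\tau\subseteq r$, i.e.\ $\ta p\subseteq\overline{T}$; and every left ideal of $\ta$ contains a minimal one.

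For~(i): if $p\in\overline{A}\cap M(\ta)$, then $p$ is an ultrafilter containing $A$ and lying in some minimal left ideal of $\ta$, so Corollary~\ref{C1} applies and $A$ is $\tau$-prethick. Conversely, assume $A$ is $\tau$-prethick; applying the definition with $U=S$ produces a finite $F\subseteq S$ with $F^{-1}A$ being $\tau$-thick, so by the second observation $\overline{F^{-1}A}=\bigcup_{g\in F}\overline{g^{-1}A}$ contains a minimal left ideal $L$ of $\ta$. Fix $q\in L\subseteq M(\ta)$; since $F$ is finite, $g^{-1}A\in q$ for some $g\in F$, that is $A\in gq$, while $gq\in gL\subseteq L\subseteq M(\ta)$ by the first observation. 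Hence $gq\in\overline{A}\cap M(\ta)$.

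Finally, (ii) is just the remark that $p\in\operatorname{cl}M(\ta)$ iff every basic neighbourhood $\overline{A}$ of $p$ — that is, every $A\in p$ — meets $M(\ta)$, which by~(i) says precisely that every $A\in p$ is $\tau$-prethick; and (iii) follows since, for a $\tau$-prethick set $A=A_1\cup\dots\cup A_n$, a point $p\in\overline{A}\cap M(\ta)$ furnished by~(i) must contain some cell $A_i$, whence $p\in\overline{A_i}\cap M(\ta)$ and $A_i$ is $\tau$-prethick by~(i). The one genuinely load-bearing step, and the one I would watch most carefully, is the inclusion $g\,M(\ta)\subseteq M(\ta)$ for $g\in S$ used in the converse half of~(i): for a semigroup filter that is not left inverse invariant the principal left translations of $\beta S$ need not map $\ta$ into itself, so this is exactly the place where the hypothesis is indispensable; everything else is routine bookkeeping with Theorem~\ref{t2.2}, Corollary~\ref{C1}, and the theory of compact right-topological semigroups from \cite{b8}.
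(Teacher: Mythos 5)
Your proof is correct and follows essentially the same route as the paper's: Corollary~\ref{C1} for one direction, and for the converse a point $p$ with $(F^{-1}A)_p\in\tau$ giving $\ta p\subseteq\overline{F^{-1}A}$, a minimal left ideal inside $\ta p$, and a translate $gq\in\overline{A}$ kept inside $M(\ta)$ by left inverse invariance. You merely make explicit the step $g\,\ta\subseteq\ta$ (hence $gL\subseteq L$) that the paper leaves implicit when it asserts $tr\in M(\ta)$.
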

\begin{proof}
$(i)$ If $\overline{A}\cap M(\ta)\neq\varnothing$ then $A$ is $\tau$-prethick by Corollary~\ref{C1}. 

Assume that $A$ is $\tau$-prethick and pick a finite subset $F$ such that $F^{-1}A$ is $\tau$-thick.
We use Theorem~\ref{t2.2} to find $p\in\ta$ such that $(F^{-1}A)_p\in\tau$.
Then $F^{-1}A\in qp$ for every $q\in\ta$.
The set $\ta p$ contains some minimal left ideal $L$ of $\ta$.
We take any $r\in L$ so $F^{-1}A\in r$ and $A\in tr$ for some $t\in F$.
Since $\tau$ is inverse left invariant $tr\in\ta$. 
Hence, $tr\in M(\ta)\cap\overline{A}$.

The statements $(ii)$ and $(iii)$ follow directly from $(i)$.
\end{proof}
\begin{Th}\label{t3.6}
Let $\tau$ be a left invariant filter on a group $G$. 
A subset $A$ of $G$ is $\tau$-prethick if and only if $A$ is not $\tau$-small.
\end{Th}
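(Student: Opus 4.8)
The plan is to recast both properties in terms of $\tau$-thick sets, the bridge being the \emph{duality}
$$X\text{ is not }\tau\text{-large}\iff G\setminus X\text{ is }\tau\text{-thick}\qquad(X\subseteq G).$$
This follows from Theorems~\ref{t2.1} and~\ref{t2.2}: if $X$ is not $\tau$-large, pick $p\in\ta$ and $U\in\tau$ with $X_p\cap U=\varnothing$; since $x^{-1}(G\setminus X)=G\setminus x^{-1}X$ and $p$ is an ultrafilter, $(G\setminus X)_p=G\setminus X_p\supseteq U$, so $(G\setminus X)_p\in\tau$ and $G\setminus X$ is $\tau$-thick, and the converse is this computation read backwards (that $\tau$ is only a filter is harmless, because one argues through an ultrafilter $p\in\ta$). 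Applying the duality to $L\setminus A$, using $G\setminus(L\setminus A)=(G\setminus L)\cup A$, and to $G\setminus L$, the definition of $\tau$-smallness becomes: for every $C\subseteq G$ that is not $\tau$-thick, $A\cup C$ is not $\tau$-thick either. Thus
$$A\text{ is not }\tau\text{-small}\iff\text{some }C\subseteq G\text{ is not }\tau\text{-thick while }A\cup C\text{ is }\tau\text{-thick}.$$
I would also record two routine consequences of Theorem~\ref{t2.4}, which applies to every $g\in G$ since a left invariant filter on a group is left inverse invariant: left multiplication by any $g\in G$ carries $\tau$-thick sets to $\tau$-thick sets and, being invertible, carries non-$\tau$-thick sets to non-$\tau$-thick sets; and $A$ is $\tau$-prethick if and only if $F^{-1}A$ is $\tau$-thick for some \emph{finite} $F\subseteq G$ (for the nontrivial implication: given finite $F$ with $F^{-1}A$ $\tau$-thick and $U\in\tau$, the nonempty set $\bigcap_{f\in F}f^{-1}U\in\tau$ contains some $h$, and then $Fh\subseteq U$ while $(Fh)^{-1}A=h^{-1}(F^{-1}A)$ is still $\tau$-thick).

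For the implication ``$\tau$-prethick $\Rightarrow$ not $\tau$-small'' I would choose $F=\{f_1,\dots,f_k\}\subseteq G$ of minimal cardinality with $F^{-1}A$ $\tau$-thick. Left translation by $f_k$ carries $F^{-1}A=\bigcup_{i\le k}f_i^{-1}A$ to $A\cup f_k\bigl(\bigcup_{i<k}f_i^{-1}A\bigr)$, which is therefore $\tau$-thick. Set $C:=f_k\bigl(\bigcup_{i<k}f_i^{-1}A\bigr)$. By minimality $\bigcup_{i<k}f_i^{-1}A$ is not $\tau$-thick, so neither is its translate $C$, whereas $A\cup C$ is $\tau$-thick; hence $A$ is not $\tau$-small by the reformulation above. (When $k=1$ this degenerates harmlessly to $C=\varnothing$ and $A$ itself $\tau$-thick.)

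For the converse, take via the reformulation a set $C$ that is not $\tau$-thick with $A\cup C$ $\tau$-thick, and let $U_1\in\tau$ witness the $\tau$-thickness of $A\cup C$. Since $C$ is not $\tau$-thick, there are a finite $E\subseteq U_1$ and $V_1\in\tau$ with $Ex\not\subseteq C$ for all $x\in V_1$. Then $E^{-1}A$ is $\tau$-thick, with witness $U^{*}:=\bigcap_{e\in E}e^{-1}U_1\in\tau$: given $F\in[U^{*}]^{<\w}$ and $V\in\tau$ we have $EF\subseteq U_1$, so the $\tau$-thickness of $A\cup C$ applied to the finite set $EF$ and to $V\cap\bigcap_{f\in F}f^{-1}V_1\in\tau$ produces some $y$ in this last set with $(EF)y\subseteq A\cup C$; for each $f\in F$ then $fy\in V_1$, so $E(fy)\not\subseteq C$, whence $efy\in A$ for some $e\in E$, i.e. $fy\in E^{-1}A$, and therefore $Fy\subseteq E^{-1}A$. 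Thus $E^{-1}A$ is $\tau$-thick and $A$ is $\tau$-prethick. The only delicate point in the whole argument is this last quantifier bookkeeping: passing to the witness $U^{*}$ so that $EF\subseteq U_1$, and intersecting with $\bigcap_{f\in F}f^{-1}V_1$ so that $fy\in V_1$ for every $f\in F$.
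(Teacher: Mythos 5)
Your proof is correct, and it takes a genuinely different route from the paper's. The paper argues by contradiction: it notes that the $\tau$-small sets form a left-invariant family closed under finite unions, so if $A$ were both $\tau$-small and $\tau$-prethick then $KA$ would be simultaneously $\tau$-thick and $\tau$-small, and then $G\setminus KA$ would be a $\tau$-large set disjoint from the $\tau$-thick set $KA$, contradicting Theorem~\ref{t2.3} (which applies because a left invariant filter on a group makes every $U\in\tau$ $\tau$-extrathick). That disposes of the implication ``$\tau$-prethick $\Rightarrow$ not $\tau$-small'' --- and, notably, the paper's proof stops there, leaving the converse implication unaddressed. You instead set up the complement duality ``$X$ not $\tau$-large $\iff$ $G\setminus X$ $\tau$-thick'' (correctly derived from Theorems~\ref{t2.1} and~\ref{t2.2} via $(G\setminus X)_p=G\setminus X_p$), recast non-smallness as the existence of a non-$\tau$-thick $C$ with $A\cup C$ $\tau$-thick, and then give direct combinatorial arguments both ways: a minimal-cardinality translation trick for the forward direction, and a careful quantifier argument (with the witnesses $U^{*}=\bigcap_{e\in E}e^{-1}U_1$ and $V\cap\bigcap_{f\in F}f^{-1}V_1$) for the converse. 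What the paper's approach buys is brevity for one direction by leaning on the ultrafilter machinery already built; what yours buys is a complete proof of the stated equivalence, including the direction the paper omits, using only the basic characterizations and the left invariance of $\tau$. I checked the delicate points --- the identity $(Fh)^{-1}A=h^{-1}(F^{-1}A)$, the equivalence of left invariance and left inverse invariance on a group (needed to invoke Theorem~\ref{t2.4} for every $g$), and the bookkeeping ensuring $EF\subseteq U_1$ and $fy\in V_1$ --- and they all hold.
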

\begin{proof}
By the definition and Theorem~\ref{t2.4}, the family of all $\tau$-small subsets of $G$ is left invariant and invariant under finite unions. We suppose that $A$ is $\tau$-small and $\tau$-prethick and take $K\in[G]^{<w}$ such that $KA$ is $\tau$-thick.
We note that $G$ is $\tau$-large and $KA$ is $\tau$-small so $G\setminus KA$ is $\tau$-large.
But $(G\setminus KA)\cap KA=\varnothing$ and we get a contradiction with Theorem~\ref{t2.3}.
\end{proof}
We do not know whether Theorems~\ref{t3.5} and~\ref{t3.6} hold for any left topological filter $\tau$ (even for filters of neighborhoods of identity of topological groups).

For a subset $A$ of an infinite group $G$, we denote
$$\Delta(A)=\{x\in G: xA\cap A\text{ is infinite}\}.$$
Answering a question from \cite{b15}, Erde proved \cite{b7} that if $A$ is prethick then $\Delta(A)$ is large.
We conclude the paper with some relative version of this statement.

For a filter $\tau$ on a semigroup $S$ and $A\subseteq S$, we denote
$$\Delta_\tau(A)=\{x\in S:(x^{-1}A\cap A)\cap U\neq\varnothing\text{ for any }U\in\tau\}.$$
In the case of a group $G$, $\Delta(A)=(\Delta(A))^{-1}$ so we have $\Delta(A)=\Delta_\tau(A)$ for the filter $\tau$ of all cofinite subsets of $G$.
\begin{Th}\label{t3.7}
Let $\tau$ be a left inverse invariant filter on a semigroup $S$.
If a subset $A$ of $S$ is $\tau$-prethick then $\Delta_\tau(A)$ is $\tau$-large.
\end{Th}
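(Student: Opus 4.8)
The plan is to pull the $\tau$-prethickness of $A$ back to a single ultrafilter, use the description of minimal left ideals from Section~3 to produce a $\tau$-large set attached to that ultrafilter, and then verify that this set is contained in $\Delta_\tau(A)$.

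First I would note that, being left inverse invariant, $\tau$ is a semigroup filter, so $\ta$ is a subsemigroup of $\beta S$ and the results of Section~3 are available. Applying Theorem~\ref{t3.5}(i), from $A$ being $\tau$-prethick I obtain an ultrafilter $p\in\overline{A}\cap M(\ta)$; thus $A\in p$, $p\in\ta$, and $p$ lies in some minimal left ideal of $\ta$. Now Theorem~\ref{t3.1}, applied to $p$ and the member $A$ of $p$, tells me that $A_p=\{x\in S:x^{-1}A\in p\}$ is $\tau$-large.

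The heart of the argument is the inclusion $A_p\subseteq\Delta_\tau(A)$. To see it, fix $x\in A_p$; then $x^{-1}A\in p$, and since also $A\in p$ we have $x^{-1}A\cap A\in p$. Because $p\in\ta$ we have $\tau\subseteq p$, so every $U\in\tau$ lies in $p$ and hence $(x^{-1}A\cap A)\cap U\in p$, in particular $(x^{-1}A\cap A)\cap U\neq\varnothing$. By definition this says $x\in\Delta_\tau(A)$. Finally, $\tau$-largeness is monotone: if $L\subseteq L'$ and $L$ is $\tau$-large, then for each $U\in\tau$, choosing $F\in[U]^{<\w}$ with $F^{-1}L\in\tau$ gives $F^{-1}L'\supseteq F^{-1}L$, so $F^{-1}L'\in\tau$ as well (alternatively, invoke Theorem~\ref{t2.1}). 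Since $A_p$ is $\tau$-large and $A_p\subseteq\Delta_\tau(A)$, the set $\Delta_\tau(A)$ is $\tau$-large.

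I do not anticipate a real obstruction here: the two points needing care are checking that the hypotheses of Theorems~\ref{t3.1} and~\ref{t3.5} are met (which is precisely what left inverse invariance gives, since it makes $\tau$ a semigroup filter), and unwinding the definition of $\Delta_\tau(A)$ carefully enough to read off $A_p\subseteq\Delta_\tau(A)$ from $\tau\subseteq p$.
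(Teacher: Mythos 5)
Your proposal is correct and follows essentially the same route as the paper: use Theorem~\ref{t3.5}(i) to get $p\in\overline{A}\cap M(\ta)$, apply Theorem~\ref{t3.1} to conclude $A_p$ is $\tau$-large, and finish via $A_p\subseteq\Delta_\tau(A)$ and monotonicity. The only difference is cosmetic: the paper states the identity $\Delta_\tau(A)=\bigcup\{A_p:p\in\ta,\ A\in p\}$ without proof, whereas you verify the needed inclusion explicitly from $\tau\subseteq p$, which is a welcome bit of added detail.
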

\begin{proof}
We observe that $\Delta_\tau(A)=\bigcup\{A_p:p\in\ta, A\in p\}$.
Now let $A$ be $\tau$-prethick. We use Theorem~\ref{t3.5}$(i)$ to find $p\in\overline{A}\cap M(\ta)$.
By Theorem~\ref{t3.1}, for every $U\in\tau$, there exists a finite subset $K\subseteq U$ such that $K^{-1}A_p\in\tau$. Since $A_p\subseteq\Delta_\tau(A)$, we have $K^{-1}\Delta_\tau(A)\in\tau$, so $\Delta_\tau(A)$ is $\tau$-large.
\end{proof}
Let $\tau$ be a left invariant filter on a group $G$ and let $X\subseteq G$.
Then $\Delta_\tau(X)=\{g\in G:(gX\cap X)\cap U\neq\varnothing\text{ for each } U\in\tau\}$ and $\Delta_\tau(X\setminus U)=\Delta_\tau(X)$ for each $U\in\tau$.
Now let $\tau$ be left invariant and $G\setminus K\in\tau$ for each $K\in[G]^{<\w}$. By \cite[Proposition 2.7]{b2}, for every $n$-partition $\PP$ of $G$, there exists $A\in\PP$ and $F\in[G]^{<\w}$ such that $F\Delta_\tau(A)\in\tau$ and $|F|\le n!$ 
This statement and above observations imply that, for any $U\in\tau$ and $n$-partition $\PP$ of $U$, there exist $F\in[G]^{<\w}$ and $A\in\PP$ such that $|F|\le n!$ and $F\Delta_\tau(A)\in\tau$.
Moreover, for any pregiven $V\in\tau$, $F$ can be chosen from $V^{-1}$.
Indeed, we take $x\in\bigcap_{g\in F}gV$ so $F^{-1}x\subseteq V$ and $x^{-1}F\Delta_\tau(A)\in\tau$.
\begin{Qs}\label{Q1}
Let $\tau$ be a filter of neighborhoods of the identity for some group topology on a group $G$ and let $U\in\tau$. Given any $n$-partition $\PP$ of $U$ and $V\in\tau$, do there exist $A\in\PP$ and $F\subseteq V$ such that $FAA^{-1}\in\tau$ and $|F|\le n!$
\end{Qs}
By Theorem~\ref{t3.2}, the answer to Question~\ref{Q1} is positive with $2^{2^n}$ in place of $n!$, .
\begin{Qs}\label{Q2}
Does there exist a function $f:\NN\to\NN$ such that for any group $G$, a filter $\tau$ of a group topology on $G$, $U\in\tau$ and an $n$-partition $\PP$ of $U$, there are $A\in\PP$ and $K\in[G]^{<\w}$ such that $K\Delta_\tau(A)\in\tau$ and $|K|\le f(n)$? If yes, then can $K$ be chosen from pregiven $V\in\tau$?
\end{Qs}
We conjecture the positive answer to Question~\ref{Q2} with $f(n)=2^{2^n}$ (or even with $f(n)=n!$).


\begin{thebibliography}{99}
\bibitem{b1} T.~Banakh, I.~Protasov, S.~Slobodianiuk, {\it Syndedic submeasures and partitions of $G$-spaces and groups}, Intern. J. Algebra Comp. {\bf 23} (2013), 1611--1623.
\bibitem{b2} T.~Banakh, I.~Protasov, S.~Slobodianiuk, {\it Densities, submeasures and partitions of $G$-spaces and groups}, Algebra Discrete Math. {\bf 17} (2014), N2, 193--221, preprint (http://arxiv.org./abs/1303.4612).
\bibitem{b3} T.~Banakh, I.~Protasov, S.~Slobodianiuk, {\it Scattered subsets of groups}, Ukr. Math. J. {\bf 65} (2015), 304--312, preprint (http://arxiv.org/abs/1312.6946).
\bibitem{b4} A.~Bella, V.~Malykhin, {\it On certain subsets of a group}, Questions, Answers Gen Topology {\bf 17} (1999), no.2, 183--197.
\bibitem{b5} A.~Bella, V.~Malykhin, {\it On certain subsets of a group II}, Questions, Answers Gen Topology {\bf 19} (2001), no.1, 81--94.
\bibitem{b6} V.~Bergelson, N.~Hindman, R.~McCutcheon, {\it Notes of size and combinatorial properties of quotient sets in semigroups}, Topology Proceedings {\bf 23} (1998), 23--60.
\bibitem{b7} J.~Erde, {\it A note on combinatorial derivation}, preprint (http:/arxiv.org/abs/1210.7622).
\bibitem{b8} N.~Hindman, D.~Strauss, Algebra in the Stone-$\check{C}$ech compactification, 2nd edition, de Gruyter, 2012.
\bibitem{b9} Ie.~Lutsenko, I.~Protasov, {\it Sparse, thin and other subsets of groups}, Intern. J. Algebra Comp. {\bf 19} (2009), 491--510.
\bibitem{b10} Ie.~Lutsenko, I.~Protasov, {\it Relatively thin and sparse subsets of groups}, Ukr. Math. J. {\bf 63} (2011), 216--225.
\bibitem{b11} V.~Malykhin, I.~Protasov, {\it Maximal resolvability of bounded groups}, Topology Appl. {\bf 20} (1996), 1--6.
\bibitem{b12} V.D.~Mazurov, E.I.~Khukhro (eds), {\it Unsolved problems in group theory, the Kourovka notebook}, 13-th augmented edition, Novosibirsk, 1995.
\bibitem{b13} I.V.~Protasov, {\it Ultrafilters and topologies on groups}, Siberian Math. J. {\bf 34} (1993), 163--180.
\bibitem{b14} I.V.~Protasov, {\em Selective survey on Subset Combinatorics of Groups}, Ukr. Math. Bull. {\bf 7} (2011), 220--257.
\bibitem{b15} I.V.~Protasov, {\it The combinatorial derivation}, Appl. Gen. Topology {\bf 14} (2013), N2, 171--178.
\bibitem{b16} I.V.~Protasov, T.Banakh, Ball Structures and Colorings of Groups and Graphs, Math. Stud. Monogr. Ser., Vol.~11, VNTL, Lviv, 2003.
\bibitem{b17} I.~Protasov, S.~Slobodianiuk, {\it Prethick subsets in partitions of groups}, Algebra Discrete Math. {\bf 14} (2012), 267--275.
\bibitem{b18} I.~Protasov, S.~Slobodianiuk, {\it Ultracompanions of subsets of groups}, Comment. Math. Univ. Corolin. {\bf 55} (2014), N2, 257--265.
\bibitem{b19} I.~Protasov, S.~Slobodianiuk, {\it Partitions of groups}, Matem. Stud. {\bf 42} (2014), 115--128.
\bibitem{b20} I.V.~Protasov, M.~Zarichnyi, General Asymptology, Math Stud. Monogr. Ser., Vol. 12, VNTL, Lviv 2007.
\end{thebibliography}
\end{document}